\documentclass{article}
\usepackage{url, graphicx, xcolor, hyperref, geometry}
\usepackage{amsmath}
\usepackage{amsthm}
\usepackage{amsfonts}
\usepackage{algorithm}
\usepackage{lipsum}
\usepackage{amsfonts}
\usepackage{graphicx}
\usepackage{epstopdf}
\usepackage{algorithmic}
\usepackage{todonotes}
\usepackage{enumitem}
\usepackage{subcaption}
\usepackage{comment}
\usepackage{url}
\usepackage{xcolor}
\usepackage{bm}
\usepackage{amssymb}
\usepackage{xfrac}
\usepackage{algorithm}
\usepackage{algorithmic}
\usepackage{color}

\newtheorem{theorem}{Theorem}
\newtheorem{lemma}{Lemma}

\newtheorem{assumption}{Assumption}
\newtheorem{example}{Example}[section]
\newtheorem{definition}{Definition}[section]

\begin{document}
	
\title{Convergence Analysis for Nonlinear GMRES}
	
\author{ Yunhui He\thanks{Department of Mathematics, University of Houston,  3551 Cullen Blvd, Houston, Texas 77204-3008, USA.(\tt{yhe43@central.uh.edu}).}}
		
\date{\today}	
 
\maketitle

\begin{abstract}
In this work, we revisit nonlinear generalized minimal residual method (NGMRES) applied to nonlinear problems. NGMRES is used to accelerate the convergence of fixed-point iterations, which can substantially improve the performance of the underlying fixed-point iterations. We consider NGMRES with a finite window size $m$, denoted as NGMRES($m$).  However, there is no convergence analysis for NGMRES($m$) applied to nonlinear systems. We prove that for general $m>0$, the residuals of NGMRES($m$) converge r-linearly under some conditions.   For $m=0$, we prove that the residuals of NGMRES(0) converge q-linearly. 

\vspace{5mm}
\noindent{\bf Keywords}.
Nonlinear GMRES, convergence analysis, local convergence, nonlinear problems

\vspace{3mm}
\noindent{\bf AMS subject classifications}.  65H10  	

\end{abstract}

 
 \section{Introduction} \label{sec:intro}
In this work, we study the asymptotic convergence of the nonlinear generalized minimal residual method (NGMRES) for solving nonlinear problems. The original idea of NGMRES comes from \cite{oosterlee2000krylov}, where the authors proposed a parallel nonlinear Krylov acceleration strategy for solving nonlinear equations,  utilizing multigrid and GMRES methods. Later, NGMRES has been extended to accelerate alternating least-squares (ALS) approaches for canonical tensor decomposition problems \cite{sterck2013steepest,sterck2012nonlinear,sterck2021asymptotic}, which greatly improves the performance of ALS. In the literature, we know that GMRES \cite{saad2003iterative,saad1986gmres} is widely used and studied in terms of theoretical research and numerical applications, and many variants have been developed, see \cite{greenbaum1996any,van1994gmresr,giraud2010flexible,nachtigal1992hybrid,guttel2014some}. However, for NGMRES, there is a lack of convergence analysis. In this work, we address this gap. We first provide a brief introduction to NGMRES.

We are interested in solving the nonlinear system of equations
\begin{equation}\label{eq:nonlinear}
	g(x)=0,
\end{equation}
where $x\in \mathbb{R}^n$ and $g: \mathbb{R}^n \rightarrow \mathbb{R}^n$.

Consider a fixed-point iteration
\begin{equation}\label{eq:FP}
	x_{k+1}=q(x_k)=x_k-g(x_k).
\end{equation}
Define the k-th residual as
\begin{equation}\label{eq:r=g}
	r_k=r(x_k)=x_k-q(x_k)=g(x_k).
\end{equation}
In practice, the fixed-point iteration (FP) \eqref{eq:FP} converges very slowly or even diverges. NGMRES can be used to accelerate \eqref{eq:FP}. We introduce NGMRES following \cite{sterck2012nonlinear}. Algorithm \ref{alg:NGMRES} shows how NGMRES updates approximation. In real applications, we consider windowed NGMRES, i.e., fixing $m$, denoted as NGMRES($m$). The principle of NGMRES is that the update  $x_{k+1}$ is a linear combination of the previous $m_k+2$ iterates, $x_k, x_{k-1}, \cdots, x_{k-m_k}$ and $q(x_k)$, where $m_k=\min\{k,m\}$. The linear combination coefficients are obtained by solving a least-squares problem \eqref{eq:min} at each step. In the literature, there is another well-known acceleration method, Anderson acceleration \cite{toth2015convergence}. The differences are that in \eqref{eq:xkp1}, AA updates $x_{k+1}$ using $q(x_{k-i})$ rather than $x_{k-i}$, and  in \eqref{eq:min}, the $q$ operator is replaced by an identity operator. For more details, we refer to \cite{anderson1965iterative,walker2011anderson,potra2013characterization}. 

\begin{algorithm}
	\caption{Windowed NGMRES (NGMRES($m$))} \label{alg:NGMRES}
	\begin{algorithmic}[1] 
		\STATE Given $x_0$  and $m\geq0$
		\STATE  For $k=0,1,\cdots$ until convergence Do:
		\begin{itemize}
			\item set $m_k=\min\{k,m\}$  
			\item compute 
			\begin{equation}\label{eq:xkp1}
				x_{k+1} = q(x_k) + \sum_{i=0}^{m_k}\beta_i^{(k)} \left(q(x_k)-x_{k-i} \right),
			\end{equation}
			where  $\beta_i^{(k)}$ is obtained by solving the following least-squares problem
			\begin{equation}\label{eq:min}
				\min_{\beta_i^{(k)}} \left\|g(q(x_k))+\sum_{i=0}^{m_k} \beta_i^{(k)} \left(g(q(x_k))-g(x_{k-i}) \right) \right\|_2^2.
			\end{equation}
		\end{itemize}
		\STATE Output $x_{k+1}$
	\end{algorithmic}
\end{algorithm}

To the best of our knowledge, there is no convergence analysis for NGMRES($m$) applied to nonlinear problems. The main contribution of this work is that we prove that when the underlying fixed-point iteration $q(x_k)$ converges, then under certain assumptions, NGMRES also converges. 

The rest of this work is organized as follows. In Section \ref{sec:convergence}, we present a convergence analysis for NGMRES for general $m$ and $m=0$. In Section \ref{sec:num}, we present some numerical experiments to illustrate the convergence of NGMRES for both contractive and noncontractive operators $q(x)$. Finally, we draw conclusions in Section \ref{sec:con}.

\section{Convergence analysis}\label{sec:convergence}
First, we provide some definitions related to convergence \cite{toth2015convergence}.
\begin{definition}
	We call a sequence $\{y_k\}$ converges q-linearly with q-factor $\varrho \in(0,1)$ to $y^*$ if
	\begin{equation*}
		\|y_k-y^*\| \leq \varrho\|y_{k-1}-y^*\|, \quad k\geq 1.
	\end{equation*}
\end{definition}

\begin{definition}
	We call a sequence $\{y_k\}$ converges r-linearly with r-factor $\mu \in(0,1)$ to $y^*$ if there exists an $\epsilon>0$ such that 
	\begin{equation*}
		\|y_k-y^*\| \leq \epsilon \mu^k\|y_0-y^*\|, \quad k\geq 1.
	\end{equation*}
\end{definition}
It is obvious that q-linear convergence implies r-linear convergence.  Before we derive the convergence analysis, we assume the following conditions are satisfied.
\begin{assumption}\label{Ass:bound}
	Assume that  NGMRES($m$) iterations generated by Algorithm  \ref{alg:NGMRES}  satisfy
	\begin{enumerate}
		\item   $ \|r(q(x_k)) +\sum_{i=0}^{m_k}\beta_i^{(k)}\big(r(q(x_k))-r(x_{k-i})\big)\| \leq \|r(q(x_k))\|$
		\item There exits a constant $\Gamma$ such that $\sum_{i=0}^{m_k} |\beta^{(k)}_i|<\Gamma$ for all $k$.
	\end{enumerate}
\end{assumption}
The first part of Assumption \ref{Ass:bound} is trivially satisfied by NGMRES, see \eqref{eq:min} and \eqref{eq:r=g}. In our testing, we did not observe the coefficients becoming large. However, since we cannot prove that they remain bounded, we state the boundedness requirement as an assumption. In \cite{hefinite25}, we are able to prove the boundedness requirement for AA(1). A similar technique might be applicable to NGMRES. We leave the proof of the bounded-coefficients condition of NGMRES for future study.

In practice,  the bounded-coefficients condition can be enforced by modifying NGMRES, for example, restarting the iteration when the coefficients exceed a threshold. More modifications of Anderson acceleration, as discussed in \cite{toth2015convergence}, can be applied to NGMRES.
\begin{assumption}\label{assump:nonlinear-q}
	For the nonlinear problem \eqref{eq:nonlinear}, we assume the following conditions hold.
	\begin{itemize}
		\item There exists an $x^*$ such that $q(x^*)=x^*$.
		\item $q$ is Lipschitz continuously differentiable  in the ball $\mathcal{B}(\hat{\delta}) =\{x :  \|x-x^*\|\leq \hat{\delta}, \hat{\delta}>0\} $.
		\item There exists a $\rho \in (0,1)$ such that for all $x ,y \in \mathcal{B} (\hat{\delta})$, $\|q(x)-q(y)\| \leq \rho \|x-y\|$.
	\end{itemize}
\end{assumption}
Note that Assumption \ref{assump:nonlinear-q} can lead to standard assumptions used in \cite{kelley1995iterative} for convergence analysis of Newton's method.  From the above assumption, we know that $g'(x^*)$ is nonsingular because $g'(x)=I-q'(x)$ and $\|q'(x^*)\|<1$. Assume that the Lipschitz constant of $g'$ in $\mathcal{B}(\hat{\delta})$ is $\gamma$. We need the following result for our proof later.

\begin{lemma}\label{lem:taylor-exp}
	Assume that Assumption \ref{assump:nonlinear-q} holds. For  sufficiently small  $ \delta$  such that $ \delta \leq \hat{\delta}$ and all $x \in \mathcal{B} (\delta)$,  we have
	\begin{equation*}
		\|r(x)-r'(x^*)e\|\leq \frac{\gamma}{2}\|e\|^2,
	\end{equation*}	
	where $e=x-x^*$, and 
	\begin{equation*}
		(1-\rho)\|e\|\leq \|r(x)\|\leq (1+\rho)\|e\|.
	\end{equation*}
\end{lemma}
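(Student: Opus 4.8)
The plan is to exploit the identity $r(x)=x-q(x)=g(x)$, so that $r'(x^*)=g'(x^*)$ and, since $q(x^*)=x^*$ forces $g(x^*)=0$, the quantity $r(x)-r'(x^*)e$ is precisely the first-order Taylor remainder of $g$ at $x^*$. Throughout I fix $\delta\le\hat\delta$ small enough that $\mathcal{B}(\delta)\subseteq\mathcal{B}(\hat\delta)$, which is where Assumption \ref{assump:nonlinear-q} grants differentiability of $q$ (hence of $g$) with a $\gamma$-Lipschitz derivative; convexity of the ball will guarantee the segments used below stay inside it.

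For the first inequality, I would write, using that $x^*+te\in\mathcal{B}(\delta)$ for $t\in[0,1]$,
\[
r(x)-r'(x^*)e = g(x)-g(x^*)-g'(x^*)e = \int_0^1\bigl(g'(x^*+te)-g'(x^*)\bigr)e\,dt,
\]
and then bound the integrand via the Lipschitz continuity of $g'$ on $\mathcal{B}(\hat\delta)$, namely $\|g'(x^*+te)-g'(x^*)\|\le \gamma t\|e\|$. Integrating gives $\|r(x)-r'(x^*)e\|\le \int_0^1 \gamma t\|e\|^2\,dt=\frac{\gamma}{2}\|e\|^2$, which is the claim.

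For the two-sided estimate on $\|r(x)\|$, I would use the contraction hypothesis directly. Since $r(x)=(x-x^*)-(q(x)-q(x^*))=e-(q(x)-q(x^*))$, the triangle inequality yields
\[
\|e\|-\|q(x)-q(x^*)\|\le \|r(x)\|\le \|e\|+\|q(x)-q(x^*)\|,
\]
and Assumption \ref{assump:nonlinear-q} bounds $\|q(x)-q(x^*)\|\le\rho\|e\|$, giving $(1-\rho)\|e\|\le\|r(x)\|\le(1+\rho)\|e\|$.

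I do not anticipate a genuine obstacle: the lemma is the standard first-order remainder estimate (as in \cite{kelley1995iterative}) combined with the contraction assumption. The only points needing care are (i) recognizing that $r=g$, so the relevant Lipschitz constant is $\gamma$ (that of $g'$), not the contraction factor $\rho$ of $q$; and (ii) choosing $\delta\le\hat\delta$ so the line segment $[x^*,x]$ lies in the region where the differentiability and Lipschitz assumptions hold, which follows from convexity of the ball. As a byproduct, the lower bound $(1-\rho)\|e\|\le\|r(x)\|$ (or, equivalently, a Neumann-series argument applied to $g'(x^*)=I-q'(x^*)$ with $\|q'(x^*)\|\le\rho<1$) re-confirms the nonsingularity of $r'(x^*)=g'(x^*)$ noted in the text preceding the lemma.
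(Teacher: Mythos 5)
Your proof is correct and supplies exactly the standard argument the paper omits (its "proof" is just "This can be derived easily; thus, we omit it"): the first bound is the usual integral-form Taylor remainder for $g=r$ using the Lipschitz constant $\gamma$ of $g'$, and the two-sided bound follows from $r(x)=e-(q(x)-q(x^*))$, the triangle inequality, and the contraction property of $q$. Nothing further is needed.
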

\begin{proof}
	This can be derived easily; thus, we omit it.
\end{proof}

\subsection{r-linear convergence for general $m$}
In the following, we first derive the convergence analysis for NGMRES($m$) for general window size $m$. The technique used for our proof is based on the convergence analysis for Anderson acceleration \cite{toth2015convergence}.
\begin{theorem}
	Let $\rho\leq \hat{\rho}<1$. If $x_0$ is sufficiently close to $x^*$, then the residuals of  NGMRES($m$) satisfy
	\begin{equation}\label{thm:hatrho}
		||r(x_k)||\leq \hat{\rho}^k\|r(x_0)\|,
	\end{equation}
	and the errors, $e_k=x_k-x^*$, satisfy
	\begin{equation*}
		\|e_k\|\leq \hat{\rho}^k \frac{1+\rho}{1-\rho}\,\|e_0\|.
	\end{equation*}
\end{theorem}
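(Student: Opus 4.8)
The plan is to prove the residual bound $\|r(x_k)\| \le \hat\rho^k \|r(x_0)\|$ by induction on $k$, following the Anderson-acceleration blueprint of \cite{toth2015convergence}, and then deduce the error bound from the two-sided estimate $(1-\rho)\|e_k\| \le \|r(x_k)\| \le (1+\rho)\|e_k\|$ in Lemma~\ref{lem:taylor-exp}. The base case $k=0$ is trivial. For the inductive step, I would start from the update formula \eqref{eq:xkp1}, which written in terms of the residual operator gives $x_{k+1} = q(x_k) + \sum_{i=0}^{m_k}\beta_i^{(k)}(q(x_k) - x_{k-i})$; the key is to relate $r(x_{k+1})$ to the ``linear'' quantity $w_k := r(q(x_k)) + \sum_{i=0}^{m_k}\beta_i^{(k)}\big(r(q(x_k)) - r(x_{k-i})\big)$, which Assumption~1(1) controls by $\|w_k\| \le \|r(q(x_k))\|$.

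The heart of the argument is to show that $r(x_{k+1})$ differs from $w_k$ only by higher-order terms. Using Lemma~\ref{lem:taylor-exp}, I would replace each residual $r(\cdot)$ by its linearization $r'(x^*)(\cdot - x^*)$ up to an $O(\|\cdot\|^2)$ error. Since $x_{k+1} - x^* = (q(x_k) - x^*) + \sum_i \beta_i^{(k)}\big((q(x_k)-x^*) - (x_{k-i}-x^*)\big)$, linearity of $r'(x^*)$ makes the linearized version of $r(x_{k+1})$ exactly equal to the linearized version of $w_k$; the discrepancy is a sum of quadratic remainder terms, each bounded by $\tfrac{\gamma}{2}$ times a squared error, and the coefficient sum is controlled by $\Gamma$ from Assumption~1(2). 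This yields an estimate of the form $\|r(x_{k+1})\| \le \|r(q(x_k))\| + C\big(\|e_k\|^2 + \sum_{i=0}^{m_k}\|e_{k-i}\|^2\big)$ for a constant $C$ depending on $\gamma$ and $\Gamma$. Then I use the contraction property: $\|r(q(x_k))\| \le (1+\rho)\|q(x_k)-x^*\| = (1+\rho)\|q(x_k)-q(x^*)\| \le (1+\rho)\rho\|e_k\| \le \rho(1+\rho)/(1-\rho)\cdot\|r(x_k)\|$ — wait, more carefully, $\|r(q(x_k))\| \le (1+\rho)\rho\,\|e_k\|$ and $\|e_k\| \le \|r(x_k)\|/(1-\rho)$, while the induction hypothesis gives $\|r(x_{k-i})\| \le \hat\rho^{k-i}\|r(x_0)\|$, hence all the errors $\|e_{k-i}\|$ are bounded by a geometric-like factor times $\|e_0\|$.

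Putting these together, I would obtain $\|r(x_{k+1})\| \le \Big(\tfrac{\rho(1+\rho)}{1-\rho} + C'\,\|e_0\|\Big)\,\hat\rho^k\|r(x_0)\|$ where the $\|e_0\|^2$-type terms have been bounded, using the induction hypothesis and the geometric decay, by (constant)$\cdot\|e_0\|\cdot\hat\rho^k\|r(x_0)\|$; here I must be a little careful that the sum $\sum_{i=0}^{m_k}\hat\rho^{2(k-i)}$ telescopes to something bounded by a constant times $\hat\rho^{2(k-m_k)}$, which is at most $\hat\rho^{k}$ times a fixed power of $\hat\rho$ — this is where choosing $m$ finite matters. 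Since $\rho \le \hat\rho$, we have $\tfrac{\rho(1+\rho)}{1-\rho} < \hat\rho$ is \emph{not} automatic — actually the cleaner route, as in \cite{toth2015convergence}, is to first fix $\hat\rho$ with $\rho < \hat\rho < 1$ and then shrink the initial ball so that $\|e_0\|$ is small enough that the bracketed factor is $\le \hat\rho$; one also needs a separate elementary check that all iterates remain in $\mathcal{B}(\delta)$, proved simultaneously in the induction via the error bound. The main obstacle I anticipate is precisely this bookkeeping: choosing $\delta$ (equivalently the closeness of $x_0$) uniformly in $k$ so that the accumulated quadratic terms, amplified by the window sum and the coefficient bound $\Gamma$, never destroy the factor $\hat\rho$ — in particular verifying that the geometric series over the window stays summable and that the constant absorbing $\gamma$, $\Gamma$, $\rho$ is genuinely independent of $k$. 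The passage from the residual bound to the error bound is then immediate from Lemma~\ref{lem:taylor-exp}: $\|e_k\| \le \tfrac{1}{1-\rho}\|r(x_k)\| \le \tfrac{1}{1-\rho}\hat\rho^k\|r(x_0)\| \le \tfrac{1+\rho}{1-\rho}\hat\rho^k\|e_0\|$.
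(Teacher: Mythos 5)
Your overall architecture is the same as the paper's: induction on the residual bound, linearization of $r$ about $x^*$ via Lemma~\ref{lem:taylor-exp}, identification of $r(x_{K+1})$ with the least-squares quantity $w_K = r(q(x_K)) + \sum_i \beta_i^{(K)}\bigl(r(q(x_K)) - r(x_{K-i})\bigr)$ up to quadratic remainders controlled by $\gamma$ and $\Gamma$, invocation of Assumption~1(1) to get $\|w_K\| \le \|r(q(x_K))\|$, absorption of $\Delta_{K+1}$ into the left-hand side, and the simultaneous verification that all iterates remain in $\mathcal{B}(\delta)$. The deduction of the error bound from the two-sided residual estimate is also identical.

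However, there is one genuine gap, and it sits at the single most important estimate. You bound $\|r(q(x_k))\|$ by the chain $\|r(q(x_k))\| \le (1+\rho)\|q(x_k)-x^*\| \le (1+\rho)\rho\|e_k\| \le \tfrac{\rho(1+\rho)}{1-\rho}\|r(x_k)\|$, which produces the leading coefficient $\tfrac{\rho(1+\rho)}{1-\rho}$. This coefficient exceeds $1$ as soon as $\rho \ge \sqrt{2}-1$, and in general exceeds $\hat\rho$ unless $\rho$ is quite small, so the induction cannot close under the stated hypothesis $\rho \le \hat\rho < 1$. You notice the problem yourself, but the repair you propose --- shrinking the initial ball --- only controls the $O(\|e_0\|)$ correction term, not this leading constant, so it cannot rescue the argument. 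The paper avoids the loss entirely by estimating $r(q(x_K))$ directly from the definition of the residual and the contractivity of $q$: since $x_K$ and $q(x_K)$ both lie in $\mathcal{B}(\delta)$ (the latter because $\|q(x_K)-x^*\|\le\rho\|e_K\|\le\rho\delta<\delta$), one has
\begin{equation*}
\|r(q(x_K))\| = \|q(x_K) - q(q(x_K))\| \le \rho\,\|x_K - q(x_K)\| = \rho\,\|r(x_K)\| \le \rho\,\hat\rho^{K}\|r(x_0)\|,
\end{equation*}
giving the leading factor $\rho/\hat\rho \le 1$ rather than $\tfrac{\rho(1+\rho)}{1-\rho}/\hat\rho$. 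With this substitution your proof goes through; without it, what you have proved is only the weaker statement that requires the extra smallness condition $\tfrac{\rho(1+\rho)}{1-\rho} < \hat\rho$, essentially the hypothesis of the NGMRES(0) theorem rather than of this one.
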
 

\begin{proof}
	Let $\delta\leq \hat{\delta}$ and $x_0\in \mathcal{B} (\delta)$, where $\delta$ is defined in Lemma \ref{lem:taylor-exp}. Reduce $\delta$ such that $\delta<2(1-\rho)/\gamma$ and 
	\begin{equation}\label{eq:cond-delta}
		\frac{\frac{\rho}{\hat{\rho}}+\frac{ \gamma \delta(1+\rho^2) \bar{\Gamma} \hat{\rho}^{-m-1}}{2(1-\rho)} } {  1-\frac{\gamma \delta}{2(1-\rho)} }\leq 1.
	\end{equation}
	
	We know that $|\sum_{i=0}^{m_k} \beta^{(k)}_0|\leq \Gamma $ for all $k$. Let $\bar{\Gamma}=1+\Gamma$. We further choose $x_0$ such that it is sufficiently close to $x^*$ and
	\begin{equation}\label{inq:x0-cond}
		\frac{\bar{\Gamma} (\rho +1+\gamma \delta/2)}{1-\rho} \hat{\rho}^{-m}\|r(x_0)\|\leq  \frac{\bar{\Gamma}(1+\rho)(\rho +1+\gamma \delta/2)}{1-\rho} \hat{\rho}^{-m}\|e_0\|\leq \delta.
	\end{equation}
	We use induction for our proof.  Assume that for  $ 0\leq k\leq K$,  
	\begin{equation}\label{inq:root}
		||r(x_k)||\leq \hat{\rho}^k\|r(x_0)\|.
	\end{equation}
	Using \eqref{inq:x0-cond} and \eqref{inq:root}, we can show that $\|e_k\|\leq \delta$ for $ 0\leq k\leq K$, i.e., $x_k\in \mathcal{B} (\delta)$. This can be done by repeating the following process. For example, since $x_0\in \mathcal{B} (\delta)$, one can show $x_1 \in\mathcal{B} (\delta)$. Then, if $x_0, x_1\in \mathcal{B} (\delta)$, one can show $x_2\in\mathcal{B} (\delta)$. 
	
	Next, we show that $x_{K+1}\in \mathcal{B} (\delta)$. From Lemma \ref{lem:taylor-exp}, we have
	\begin{equation*}
		r(x_k)=r'(x^*)e_k +\Delta_k \quad \text{for} \quad 0\leq k\leq K,
	\end{equation*}
	where $\|\Delta_k\|\leq \frac{\gamma}{2}\|e_k\|^2$. Using $r(x_k)=x_k-q(x_k)$, we rewrite the above equation as
	\begin{equation*}
		q(x_k)=x^*+q'(x^*)e_k- \Delta_k.
	\end{equation*}
	Define $\bar{e}_k=q(x_k)-q(x^*)=q(x_k)-x^*$. Since $x_k, x^* \in \mathcal{B} (\delta)$ for $0\leq k\leq K$, using Assumption \ref{assump:nonlinear-q} gives 
	\begin{equation*}
		\|q(x_k)-x^*\|\leq \rho \|x_k-x^*\|\leq \rho \delta<\delta.
	\end{equation*}
	It means that $q(x_k) \in \mathcal{B} (\delta)$ for $0\leq k\leq K$. Using Lemma \ref{lem:taylor-exp}, we have 
	\begin{align*}
		r(q(x_k))&=r'(x^*)\bar{e}_k +\bar{\Delta}_k, \\
		q(q(x_k))&=x^*+q'(x^*)\bar{e}_k- \bar{\Delta}_k,
	\end{align*}
	where $\|\bar{\Delta}_k\|\leq \frac{\gamma}{2}\|\bar{e}_k\|^2$.
	
	According to \eqref{eq:xkp1} and the above results, we have
	\begin{align}
		x_{K+1}&=q(x_K)+ \sum_{i=0}^{m_k}\beta_i^{(K)}(q(x_K)-x_{K-i})\\
		&=x^*+q'(x^*)e_K- \Delta_K+\sum_{i=0}^{m_k} \beta^{(K)}_i\left(q'(x^*)e_K-\Delta_K- e_{K-i}\right)\\
		&=x^*+ q'(x^*)e_K+ \sum_{i=0}^{m_k} \beta^{(K)}_i\left(q'(x^*)e_K - e_{K-i} \right) -\left(1+\sum_{i=0}^{m_k} \beta^{(K)}_i\right)\Delta_K\\
		&=x^*+ \alpha q'(x^*) e_K+\sum_{i=0}^{m_k} \alpha_i  e_{K-i}  -\alpha{\Delta}_K\label{eq:xk1-expand}
	\end{align}
	where $\alpha=1+\sum_{i=0}^{m_k} \beta^{(K)}_0$, and $\alpha_i=-\beta^{(K)}_i$ for $i=0,\cdots,m_k$.

	Using Lemma \ref{lem:taylor-exp},  for $0\leq k\leq K$ we have
	\begin{equation*}
		(1-\rho)\|e_k\|\leq \|r(x_k)\|\leq (1+\rho)\|e_k\|.
	\end{equation*}
	For $0\leq i\leq m_k\leq m$, we estimate 
	\begin{align*}
		\|e_{K-i}\|^2 &\leq \frac{1}{1-\rho} \|e_{K-i}\|\|r(x_{K-i})\| \\
		& \leq \frac{\delta}{1-\rho}\|r(x_{K-i})\|\\
		&\leq \frac{\delta \hat{\rho}^{K-i}}{1-\rho}\|r(x_{0})\|\\
		&\leq \frac{\delta \hat{\rho}^{K-m}}{1-\rho}\|r(x_{0})\|.
	\end{align*}
	From the previous assumption, we have
	\begin{equation*}
		|\alpha|\leq 1+|\sum_{i=0}^{m_k} \beta^{(K)}_0|\leq\bar{\Gamma}.
	\end{equation*}
	Using Lemma \ref{lem:taylor-exp}, for $0\leq k\leq K$  we have
	\begin{equation}\label{inq:eki}
		\|e_{K-i}\| \leq  \frac{1}{1-\rho}\|r(x_{K-i})\| \leq \frac{\hat{\rho}^{K-i}}{1-\rho}\|r(x_0)\|\leq \frac{\hat{\rho}^{-m}}{1-\rho}\|r(x_0)\|.
	\end{equation}
	Then,
	\begin{equation}\label{inq:sumeki}
		\sum_{i=0}^{m_k}\| \alpha_i  e_{K-i}\|\leq  \frac{\bar{\Gamma} \hat{\rho}^{-m}}{1-\rho}\|r(x_0)\|,
	\end{equation}
	and 
	\begin{equation}\label{inq:eK}
		\|\Delta_K\|\leq \frac{\gamma}{2}\|e_{K}\|^2\leq\frac{\gamma \delta}{2(1-\rho)}\|r_{K}\|\leq \frac{\bar{\Gamma} \gamma \delta \hat{\rho}^{-m}}{2(1-\rho)}\|r(x_0)\|.
	\end{equation}
	We rewrite \eqref{eq:xk1-expand} as
	\begin{equation}\label{eq:eKplusone-form}
		e_{K+1}= \alpha q'(x^*) e_K+\sum_{i=0}^{m_k} \alpha_i  e_{K-i} -\alpha{\Delta}_K.
	\end{equation}
	Using $\|q'(x^*)\|<\rho$,  \eqref{inq:eki}, \eqref{inq:sumeki}, and \eqref{inq:eK}, we obtain
	\begin{align*}
		\|e_{K+1}\| &\leq \rho|\alpha|  \|e_K\|+ \sum_{i=0}^{m_k}\| \alpha_i  e_{K-i}\|  + \|\alpha{\Delta}_K\|\\
		&\leq  \frac{\bar{\Gamma}\rho}{1-\rho} \hat{\rho}^{-m}\|r(x_0)\| + \frac{\bar{\Gamma} \hat{\rho}^{-m}}{1-\rho}\|r(x_0)\|+ \frac{\bar{\Gamma} \gamma \delta \hat{\rho}^{-m}}{2(1-\rho)}\|r(x_0)\|\\
		&=  \frac{\bar{\Gamma} (\rho +1+\gamma \delta/2)}{1-\rho} \hat{\rho}^{-m}\|r(x_0)\|\\
		&\leq \delta\leq \hat{\delta}.
	\end{align*}
	Since $\|e_{K+1}\|\leq \delta$, it follows that 
	\begin{equation}\label{eq:rKplusone}
		r(x_{K+1})=r'e_{K+1}+\Delta_{K+1},
	\end{equation}
	where 
	\begin{equation}\label{inq:detaKplusone}
		\|\Delta_{K+1}\|\leq \frac{\gamma}{2}\|e_{K+1}\|^2\leq\frac{\gamma \delta}{2(1-\rho)}\|r_{K+1}\| .
	\end{equation}
	Using \eqref{eq:rKplusone} and \eqref{eq:eKplusone-form} leads to
	\begin{align*}
		r(x_{K+1})&=r'(x^*)\Big(q(x_K)+ \sum_{i=0}^{m_k}\beta_i^{(K)}(q(x_K)-x_{K-i})-x^*\Big)+\Delta_{K+1}\\
		&= r'(x^*)\Big(1+\sum_{i=0}^{m_k}\beta_i^{(K)} \Big)(q(x_K)-x^*) - r'(x^*)\sum_{i=0}^{m_k}\beta_i^{(K)}(x_{K-i}-x^*) +\Delta_{K+1}\\
		&= \alpha r'(x^*)(q(x_K)-x^*) + r'(x^*)\sum_{i=0}^{m_k}\alpha_i(x_{K-i}-x^*) +\Delta_{K+1}\\
		&= \alpha r'(x^*)\bar{e}_K + \sum_{i=0}^{m_k}\alpha_i^{(K)} r'(x^*)e_{K-i} +\Delta_{K+1}\\
		&= \alpha \Big(r(q(x_K))-\bar{\Delta}_K\Big) +\sum_{i=0}^{m_k}\alpha_i \big(r(x_{K-i})-\Delta_{K-i}\big)  +\Delta_{K+1}\\
		& = r(q(x_K)) +   \sum_{i=0}^{m_k}\beta_i^{(K)}\big(r(q(x_K))-r(x_{K-i})\big) -\alpha\bar{\Delta}_K -\sum_{i=0}^{m_k}\alpha_i \Delta_{K-i} +\Delta_{K+1}\\
	\end{align*}
	Recall
	\begin{equation*}
		\|\alpha \bar{\Delta}_K\|\leq \frac{  \gamma\bar{\Gamma}}{2}\|\bar{e}_K\|^2\leq \frac{ \gamma\rho^2 \bar{\Gamma}}{2}\|e_K\|^2 \leq \frac{\delta \gamma\rho^2 \bar{\Gamma} \hat{\rho}^{K-m}}{2(1-\rho)}\|r(x_{0})\|,
	\end{equation*}
	and 
	\begin{equation*}
		\|\sum_{i=0}^{m_k} \alpha_i \Delta_{K-i}\| \leq \frac{\gamma}{2} \sum_{i=0}^{m_k} |\alpha_i| \|e_{K-i}\|^2 
		\leq  \frac{ \gamma \bar{\Gamma}\delta \hat{\rho}^{K-m}}{2(1-\rho)}\|r(x_{0})\|.
	\end{equation*}
	According to the definition of $r$, and $x_K, q(x_K) \in \mathcal{B} (\delta)$, we have
	\begin{equation*}
		\| r(q(x_K)) \|=\|q(x_K)-q^2(x_K)\|\leq \rho \|x_K-q(x_K)\|=\rho\|r_K\|\leq \rho \hat{\rho}^K\|r_0\|.
	\end{equation*}
	Using \eqref{inq:detaKplusone}, we estimate
	\begin{align*}
		\|r(x_{K+1})\| \big(1-\frac{\gamma \delta}{2(1-\rho)} \big) \leq &\|r(x_{K+1})\|-\|\Delta_{K+1}\| \\
		\leq &\|r(q(x_K)) + \sum_{i=0}^{m_k}\beta_i^{(K)}\big(r(q(x_K))-r(x_{K-i})\big)\| +\|\alpha\bar{\Delta}_K\|+ \|\sum_{i=0}^{m_k}\alpha_i  \Delta_{K-i}\|\\
		\leq & \rho \hat{\rho}^K\|r(x_0)\|+ \frac{\delta \gamma\rho^2 \bar{\Gamma} \hat{\rho}^{K-m}}{2(1-\rho)}\|r(x_{0})\|+	 \frac{ \gamma \bar{\Gamma}\delta \hat{c}^{K-m}}{2(1-\rho)}\|r(x_{0})\|\\
		\leq & \bigg(  \frac{\rho}{\hat{\rho}} +\frac{ \gamma\delta \bar{\Gamma} \hat{\rho}^{-m-1}(1+\rho^2)}{2(1-\rho)} \bigg)\hat{\rho}^{K+1}\|r(x_{0})\|.
	\end{align*}
	Therefore,
	\begin{equation*}
		\|r(x_{K+1})\| \leq  
		\frac{\frac{\rho}{\hat{\rho}}+\frac{ \gamma \delta(1+\rho^2) \bar{\Gamma} \hat{\rho}^{-m-1}}{2(1-\rho)} } {  1-\frac{\gamma \delta}{2(1-\rho)} } \hat{\rho}^{K+1}\|r(x_{0})\|\leq \hat{\rho}^{K+1}\|r(x_{0})\|,
	\end{equation*}
	where in the last inequality we use the condition \eqref{eq:cond-delta}.
\end{proof}
We remark that although the window size $m$ does not appear in $\hat{\rho}$ in \eqref{thm:hatrho}, it may be implicitly related to $m$. In general, a larger window size improves the asymptotic constant but not the asymptotic rate (i.e., the asymptotic order of convergence).

\subsection{q-linear convergence for $m=0$}
For $m=0$,  NGMRES(0) is given by
\begin{equation}\label{eq:NGMRES0}
	x_{k+1}=q(x_k)+\beta_0^{(k)}(q(x_k)-x_k),
\end{equation}
where the coefficient is 
\begin{equation*}
	\beta_0^{(k)} =-\frac{r(q(x_k))^T (r(q(x_k))-r(x_k) )}{\|r(q(x_k)-r(x_k)) \|_2^2}.
\end{equation*}
First, we provide an equality that will be used later. Define
\begin{align}
	A_k=&r(q(x_k))+\beta_0^{(k)} (r(q(x_k)) -r(x_k))\\
	=&q(x_k)+\beta_0^{(k)}(q(x_k)-x_k) -\big(q^2(x_k)+\beta_0^{(k)} (q^2(x_k)-q(x_k)) \big).
\end{align}
It follows that 
\begin{equation*}
	x_{k+1}=A_k+ q^2(x_k)+\beta_0^{(k)} (q^2(x_k)-q(x_k)).
\end{equation*}
Then,
\begin{equation*}
	r(x_{k+1})=x_{k+1}-q(x_{k+1})=A_k+q^2(x_k)+\beta_0^{(k)} (q^2(x_k)-q(x_k))-q(x_{k+1}).
\end{equation*}
Let $B_k=q^2(x_k)+\beta_0^{(k)} (q^2(x_k)-q(x_k))-q(x_{k+1})$. Then $r(x_{k+1})=A_k+B_k$.

\begin{theorem}\label{thm:m0q-convergence}
	Assume that Assumption 2.2 holds, $ x_0\in \mathcal{B} (\hat{\delta})$, and that $\rho$ is
	small enough so that
	\begin{equation}\label{eq:NGMRE0-rho-requirement}
		\eta = \frac{\rho(1+\rho)}{1-\rho}<1,
	\end{equation}
	i.e., $0<\rho <\sqrt{2}-1\approx 0.4142$.
	Then,  NGMRES(0) residuals with $\ell_2$ optimization residuals converge q-linearly with
	q-factor $\eta$.
\end{theorem} 

\begin{proof}
	We process our proof by induction. Assume that for all $k$, where $0\leq k\leq K$, it holds
	\begin{equation*}
		\|r(x_k)\|_2\leq \eta \|r(x_{k-1})\|_2.
	\end{equation*}
	Next, we prove the above inequality is true for $K+1$. From the above discussion, we have
	\begin{equation*}
		r(x_{K+1})=A_K+B_K.
	\end{equation*}
	According to the definition of the least-squares problem and the contractivity of $q$, we have 
	\begin{equation*}
		\|A_K\|_2\leq \|r(q(x_K))\|_2=\|q(x_K)-q^2(x_K)\|_2\leq \rho\|x_K-q(x_K)\| =\rho\|r(x_K)\|_2.
	\end{equation*}
	Next, we estimate $B_K$.
	\begin{align*}
		\|B_K\|_2&=\|\big(q^2(x_K)+\beta_0^{(K)} (q^2(x_K)-q(x_K)) \big)-q(x_{K+1})\|_2\\
		&\leq\|\big(q^2(x_K)-q(x_{K+1})\|_2+\|\beta_0^{(K)} (q^2(x_K)-q(x_K))\|_2\\
		&\leq \rho\|q(x_K)-x_{K+1}\|_2+ \rho|\beta_0^{(K)}| \|q(x_K)-x_K\|_2\\
		&= \rho |\beta_0^{(K)} (q(x_K)-x_K)|+ \rho|\beta_0^{(K)}| \|q(x_K)-x_K\|_2\\
		&\leq 2\rho |\beta_0^{(K)}|\|r(x_K)\|_2.
	\end{align*}
	However,
	\begin{align*}
		|\beta_0^{(K)}|&=\left| -\frac{r(q(x_K))^T (r(q(x_K))-r(x_K) )}{\|r(q(x_K)-r(x_K)) \|_2^2}\right|\\
		&\leq \frac{\|r(q(x_K))\|_2}{ \|r(q(x_K)-r(x_K)) \|_2}\\
		&\leq \frac{\rho\|r(x_K) \|_2}{ (1-\rho)\|r(x_K) \|_2}\\
		&=\frac{\rho}{1-\rho}.
	\end{align*}
	Combining the above results, we have
	\begin{equation*}
		\|r(x_{K+1})\|_2\leq  \rho\|r(x_K)\|_2+\frac{2\rho^2}{1-\rho}\|r(x_K)\|_2=\frac{\rho(1+\rho)}{1-\rho}\|r(x_K)\|_2.
	\end{equation*}
	Let $\eta=\frac{\rho(1+\rho)}{1-\rho}$, which gives the desired result.
\end{proof}
We remark that condition \eqref{eq:NGMRE0-rho-requirement} is merely a sufficient condition for the q-linearly convergence of NGMRES(0). There might be case where $\rho> \sqrt{2}-1$ and NGMRES(0) still converges.  Moreover, $\eta$ may not provide a sharper bound for the q-factor. Providing an improved analysis will be our future work.
Note that our convergence analysis for NGMRES($m$) does not show by how much  NGMRES($m$) can converge faster than the underlying fixed-point iteration. In practice, we did observe that NGMRES($m$) can greatly improve the performance of the underlying fixed-point iteration. Theoretical analysis in this direction would be very interesting, and we will leave it for future research.

\section{Numerical experiments}\label{sec:num}
In this section, we report some numerical experiments to show the q-linear convergence of NGMRES(0) and the r-linear convergence of NGMRES($m$), where $m>0$. Additionally, we study the convergence behavior of FP and NGMRES for multiple fixed points, as well as the convergence radius $\hat{\delta}$ in Assumption \ref{assump:nonlinear-q}. 
\begin{example}\label{ex:2nonlinear-contr}
	We consider a $2\times 2$ nonlinear system taken from \cite{de2022linear} with some modifications:
	\begin{equation}\label{eq:nonlinear2}
		\left\{  \begin{aligned}
			z_2&=z_1^2  \\
			z_1+(z_1-1)^2+z_2^2&=1 
		\end{aligned}  \right.
	\end{equation}
	and the fixed-point function
	\begin{equation}\label{eq:q-non-ex1}
		q(x)=\begin{bmatrix}
			\frac{c_1}{2}(z_1+z_1^2+z_2^2)\\
			\frac{c_2}{2}(z_1^2+z_2)
		\end{bmatrix}, \text{where} \,\,\, x=\begin{bmatrix} z_1 \\ z_2\end{bmatrix}.
	\end{equation}
	It can be shown that $x^*=[0,0]^T$ is a solution of the fixed-point iteration and the nonlinear problem \eqref{eq:nonlinear2}. Note that
	\begin{equation*}
		q'(x)=\begin{bmatrix}
			c_1z_1+c_1/2 & c_1z_2\\
			c_2z_1 &  c_2/2
		\end{bmatrix}.
	\end{equation*}
	Thus, $\|q'(x^*)\|=\frac{\max\{|c_1|, |c_2| \}}{2}$.
\end{example}
In the following, we consider different choices of $c_1$ and $c_2$ such that $q'(x^*)$ has different spectral radii, which means the FP method can either converge or diverge when the initial guesses are close to the exact solution. To validate our theoretical results, we define $g(x)=x-q(x)$.

Case 1: $c_1=\frac{4}{5}, c_2=\frac{2}{3}$. $\|q'(x^*)\|=\frac{2}{5}<1$, which satisfies \eqref{eq:NGMRE0-rho-requirement}. We run 1000 random initial guesses $x_0=\frac{2y}{5\|y\|}$, where $y \in(-1,1)^2$, and the stopping criterion is that $\|r_k\|\leq 10^{-14}$. Here, we choose the initial guesses close enough to the exact solution ($x^*=0$) to meet the requirements of our theorems.  We report the numerical q-convergence factor on the left panel of Figure \ref{fig:NGMRES0}. We observe that for different initial guesses, the q-convergence factors vary, but they seem to have an upper bound, and this upper bound is much smaller than \eqref{eq:NGMRE0-rho-requirement}. This indicates that our theoretical bound is not sharp. We will further explore this in the future.

To see whether NGMRES(0) is faster than FP, we consider $x_0=[-0.25, 0.25]^T$ and report the convergence history on the right panel of Figure \ref{fig:NGMRES0}. We see that NGMRES(0) only takes around one-third of the iterations of FP to achieve the desired stopping criterion.

Case 2: $c_1=c_2=1$. $\|q'(x^*)\|=\frac{1}{2}$. We report the convergence history of FP and NGMRES(0) in the left panel of Figure \ref{fig:NGMRES01}. We observe that NGMRES(0) converges much faster than FP. Notice $\|q'(x^*)\|=\frac{1}{2}>\frac{2}{5}$, and the FP (see the dashed line on the left panel of Figure \ref{fig:NGMRES01})  takes more iterations to reach the stopping criterion compared with case 1, see the dashed line on right panel of Figure \ref{fig:NGMRES0}; while  NGMRES(0) in this case (see the solid line on the left panel of Figure \ref{fig:NGMRES01})  takes fewer iterations compared with case 1, see the solid line on right panel of Figure \ref{fig:NGMRES0}.

Case 3: $c_1=1, c_2=2$. $\|q'(x^*)\|=1$. In this case, the fixed-point method diverges. Thus, we report the convergence history of  NGMRES(0) and NGMRES(1) on the right panel of Figure \ref{fig:NGMRES01}. We see that NGMRES(0) stagnates after a few iterations; while NGMRES(1) converges very fast and  reaches the stopping criterion.

\begin{figure}[h!]
	\centering
	\includegraphics[width=6cm]{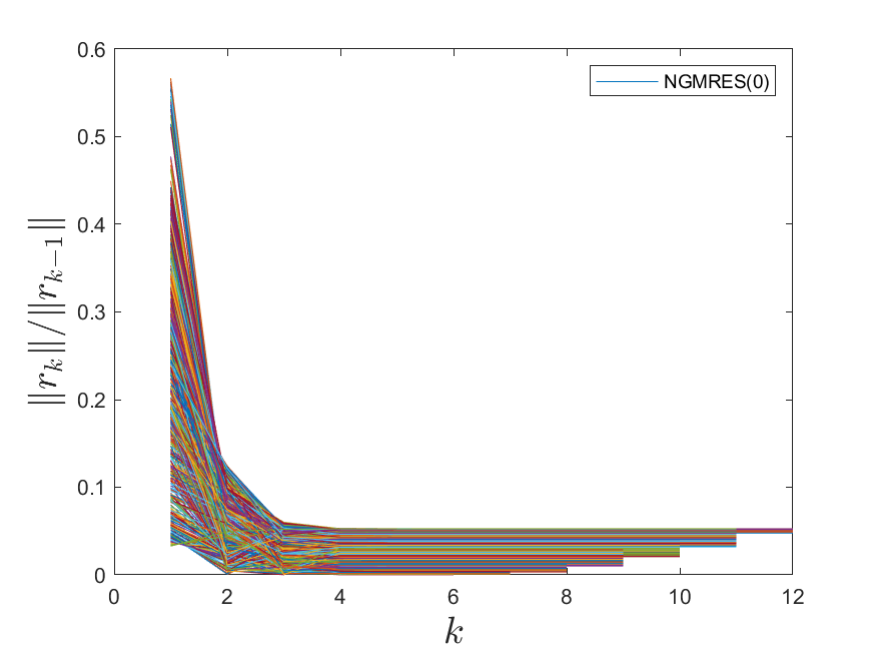}
	\includegraphics[width=6cm]{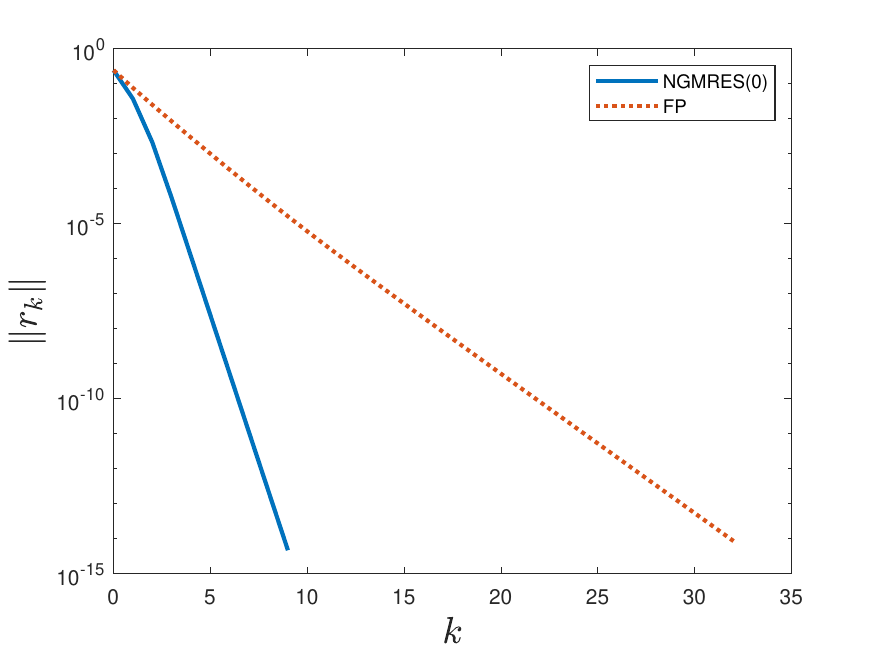}
	\caption{Example \ref{ex:2nonlinear-contr}. Left panel: q-convergence factor $\|r_k\|/\|r_{k-1}\|$ as a function of the iteration index $k$ with 1000 random initial guesses $x_0=\frac{2y}{5\|y\|}$, where $y \in(-1,1)^2$. Right panel: convergence history of NGMRES(0) and FP  with initial guess $x_0=[-0.25, 0.25]^T$ for $c_1=\frac{4}{5}, c_2=\frac{2}{3}$.}\label{fig:NGMRES0}
\end{figure}

\begin{figure}[h!]
	\centering
	\includegraphics[width=6cm]{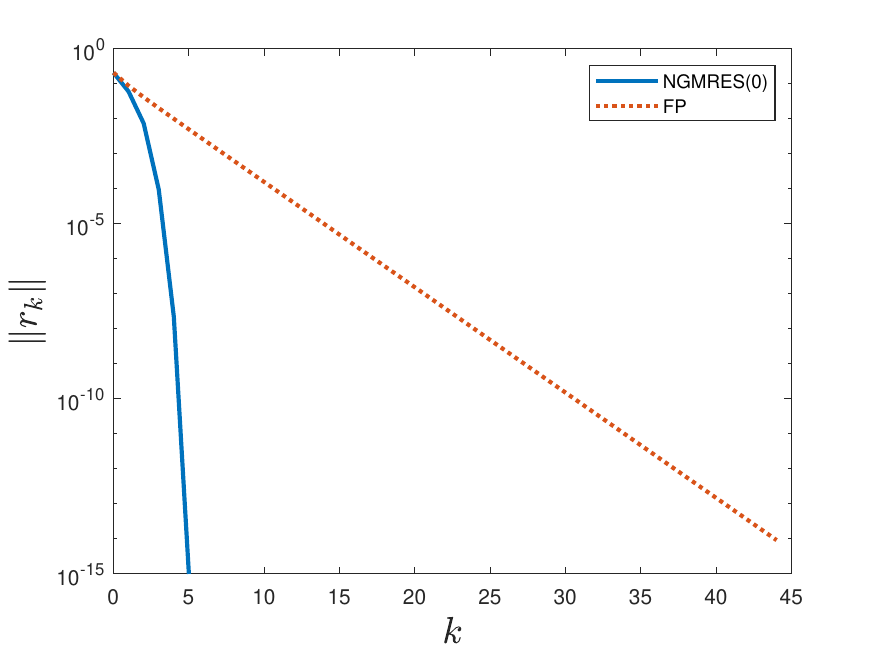}
	\includegraphics[width=6cm]{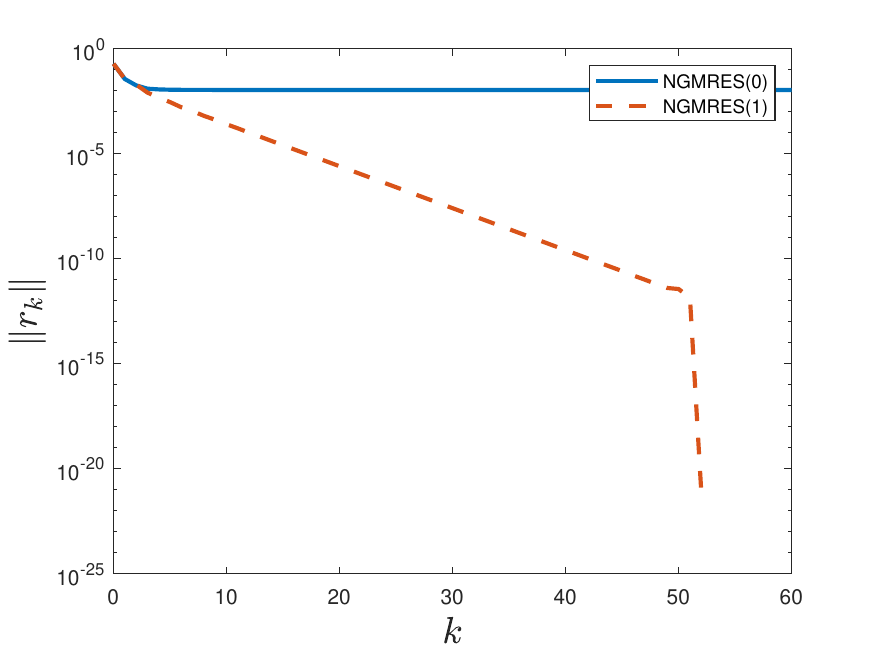}
	\caption{Example \ref{ex:2nonlinear-contr}. Left panel: convergence history of NGMRES(0) and FP  with initial guess $x_0=[-0.25, 0.25]^T$ for $c_1=c_2=1$.  Right panel: convergence history of NGMRES(0) and NGMRES(1) with initial guess $x_0=[-0.25, 0.25]^T$ for $c_1=1, c_2=2$.}\label{fig:NGMRES01}
\end{figure}
From \eqref{eq:nonlinear2}, we can obtain $z_1(z_1^3+z_1-1)=0$. This means that \eqref{eq:nonlinear2} has another real solution $x^*=[a,a^2]^T$, where $a= \sqrt[3]{\frac{1}{2}+\frac{\sqrt{3}}{2}\iota}  +\sqrt[3]{\frac{1}{2}-\frac{\sqrt{3}}{2}\iota} \approx 0.6823$. Here $\iota^2=-1$ and $x^*\approx[0.6823, 0.4655]^T$.  Let $x^*_1=[0,0]^T$ and $x^*_2=[a,a^2]^T$.  We consider \eqref{eq:q-non-ex1} and let $c_1=c_2=1$. It can be shown that $x^*_1$ and $x^*_2$ are two different fixed points of \eqref{eq:q-non-ex1}.   We test four different initial guesses, $x_{0,1}=1/3x_2^*, x_{0,2}=1/2x_2^*, x_{0,3}=2/3x_2^*, x_{0,4}=3/2x_2^*$, to study the convergence behavior of NGMRES and FP. In Table \ref{table:1}, we report the convergent fixed point for different methods.  Recall that $\|q'(x_1^*)\|=1/2$. It can be shown that $\|q'(x_2^*)\|>1$, which explains why FP diverges for the initial guess $x_{0,4}$ (also far from $x_1^*$) while converging to $x_1^*$ for the other three initial guesses. We see that NGMRES(0) and NGMRES(2) converges to the nearest fixed point. NGMRES(1) converges to $x_2^*$ for the initial guess $x_{0,4}$ and to $x_1^*$ for the other three initial guesses. This study suggests that the convergence behavior of NGMRES($m$) may be complex due to the window size $m$ and the properties of the fixed points of FP.

\begin{table}[h!]
	\centering
	\begin{tabular}{ c|c|c|c|c   } 
		\hline
		Method   &$x_{0,1}$   &$x_{0,2}$    &$x_{0,3}$   &$x_{0,4}$   \\	 
		\hline
		FP        & $x^*_1$    &$x^*_1$     &$x^*_1$    & divergent      \\ 
		\hline
		NGMRES(0) &  $x^*_1$    &$x^*_1$     &$x^*_2$    &$x^*_2$       \\ 
		\hline
		NGMRES(1) &  $x^*_1$    & $x^*_1$    &$x^*_1$    &$x^*_2$        \\
		\hline
		NGMRES(2) &  $x^*_1$    & $x^*_1$    &$x^*_2$    &$x^*_2$        \\
		\hline
	\end{tabular}
	\caption{Multiple fixed points of Example \ref{ex:2nonlinear-contr} with $c_1=c_2=1$.}
	\label{table:1}
\end{table}

\begin{example}[Trigonometric functions, $s=100$] \label{ex:trig}
	We consider the following large nonlinear system from \cite{chen2023asymptotic}[Example 7.3].  Let $x^*=[\pi/4, \cdots, \pi/4]^T \in \mathbb{R}^{s}$.
	Let  
	\begin{equation*}
		s-\sum_{j=1}^{s}\cos z_j+ i(1-\cos z_i)-\sin z_i=g_i(x^*), i=1,2,\cdots, s,
	\end{equation*}	 
	where 
	\begin{equation*}
		g_i(x)=s-\sum_{j=1}^{s}\cos z_j+ i(1-\cos z_i)-\sin z_i,  i=1,2,\cdots, s.
	\end{equation*}
	We define
	\begin{equation*}
		g(x)= \frac{1}{s}\begin{bmatrix}
			g_1(x)-g_1(x^*)\\
			g_2(x)-g_2(x^*)\\
			\vdots \\
			g_s(x)-g_s(x^*)
		\end{bmatrix}, \quad  \text{where} \,\,\, x=\begin{bmatrix} z_1\\z_2\\ \vdots\\ z_s\end{bmatrix},
	\end{equation*}
	and the fixed-point function  
	\begin{equation*}
		q(x)=x-g(x).
	\end{equation*}
	It is obvious that the fixed-point $x^*=q(x^*)$ is a solution of $g(x)=0$. We numerically find that $\|q'(x^*)\|\approx 0.9989$, which indicates that the fixed-point iterations converge very slowly.
\end{example}
In this example, we study NGMRES(2). We run 1000 random initial guesses $x_0$ that are close to the exact solution $x^*$, where $x_0=y_0+\pi/4$ with $y_0=\frac{y}{10\|y\|}$ and $y\in(-1,1)^s$. The stopping criterion is that $\|r_k\|\leq 10^{-14}$ or the maximum iteration number is 300. The left panel of Figure \ref{fig:NGMRES2} shows the convergence history $\|r_k\|$ as a function of iteration index $k$. We see that NGMRES(2) converges much faster than FP.  The right panel of Figure \ref{fig:NGMRES2} displays the numerical root-convergence factors of NGMRES(2) and FP. We see that NGMRES(2) has a smaller root-convergence factor than that of FP. The root-convergence factors seem to have upper bounds for NGMRES(2) and FP, respectively.
\begin{figure}[h!]
	\centering
	\includegraphics[width=6cm]{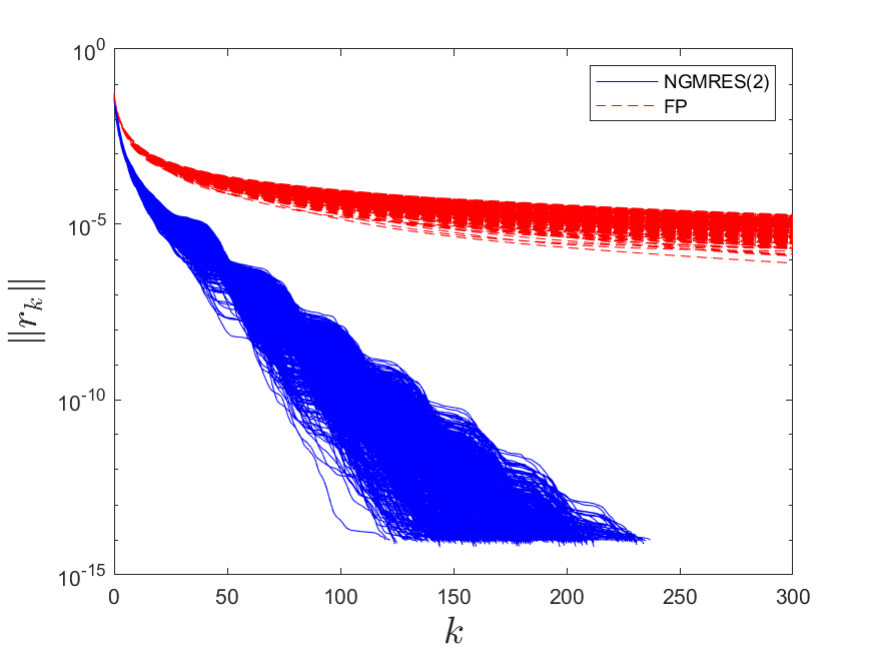}
	\includegraphics[width=6cm]{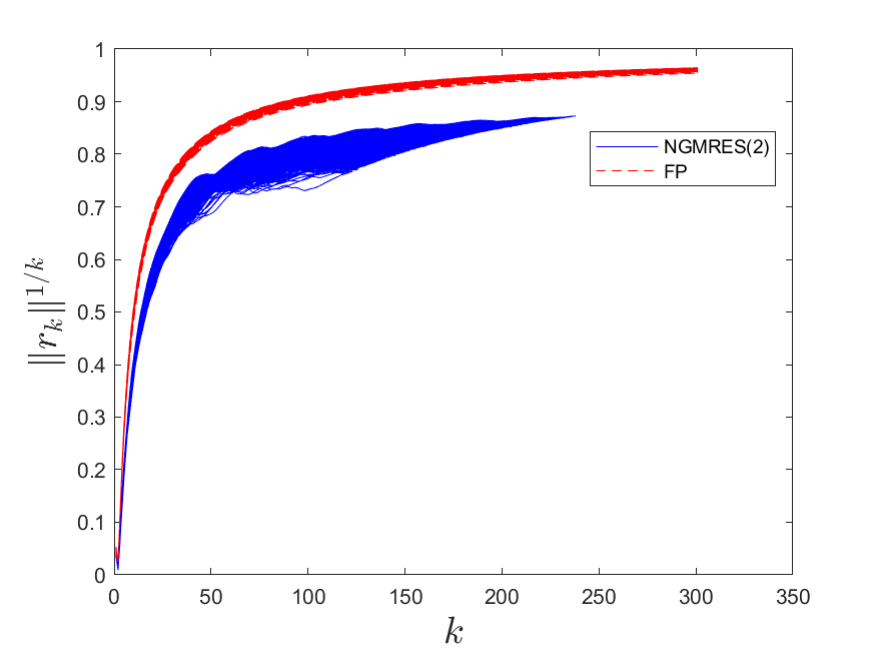}
	\caption{Example \ref{ex:trig}. Left panel: convergence history $\|r_k\|$ as a function of iteration index $k$ with 1000 random initial guesses. Right panel: the  corresponding root-convergence factor $\|r_k\|^{1/k}$ as a function of the iteration index $k$ with 1000 random initial guesses.}\label{fig:NGMRES2}
\end{figure}
To study the convergence radius $\hat{\delta}$ in Assumption \ref{assump:nonlinear-q}, we study the residual norm versus initial distance $\|x_0-x^*\|$. We consider Example  \ref{ex:2nonlinear-contr} with different random initial guesses given by $x_0=d\frac{y}{\|y\|}$, where $y \in(-1,1)^2$ and $d=0.001:0.05:3$, and Example \ref{ex:trig}  with different random initial guesses given by $x_0=d\frac{y}{\|y\|}$, where $y \in(-1,1)^s$ and $d=0.001:0.05:5$. The stopping criterion is that $\|r_k\|\leq 10^{-14}$ or the maximum iteration number is 500. 

In Figure \ref{fig:rkVsIn}, we report the final residual norm versus initial distance $d$ for FP and NGMRES($m$).
The left panel of Figure \ref{fig:rkVsIn} is for Example \ref{ex:2nonlinear-contr} with $c_1=c_2=1$. As discussed earlier, FP may diverge for some initial guesses. If such a situation occurs, we set the corresponding residual norm (NaN) as $10^{15}$ in our plot. We find FP converges for $d<0.901$ and NGMRES(0) converges for $d<0.651=\hat{\delta}$. We also note that this $\hat{\delta}$ varies for different random initial guesses.  However, NGMRES(1), NGMRES(3), and NGMRES(5) converge to the exact solution for all $d$ considered here, except for one value shown in the plot. We emphasize that this example is more complex since it has two fixed points, meaning the iterative sequences might converge to different ones. See  Table \ref{table:1}.  The right panel of Figure \ref{fig:rkVsIn} is for Example \ref{ex:trig}. We find FP converges very slowly and stagnates when the norm of the residual is around $10^{-5}$, NGMRES(0) performs poorly for all initial guesses. However, NGMRES(1), NGMRES(3), and NGMRES(5) converge to the exact solution for the initial guesses with $d<0.901=\hat{\delta}$. We also note that this $\hat{\delta}$ varies for different random initial guesses. The two examples suggest that the convergence radius is problem-dependent and influenced by the window size $m$. A larger window size corresponds to a larger convergence radius.
\begin{figure}[h!]
	\centering
	\includegraphics[width=6cm]{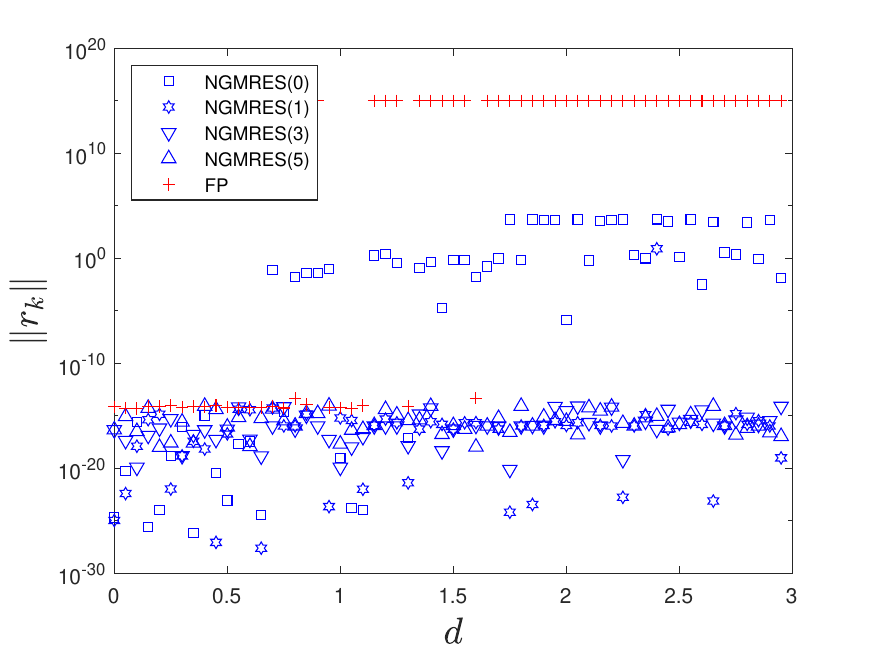}
	\includegraphics[width=6cm]{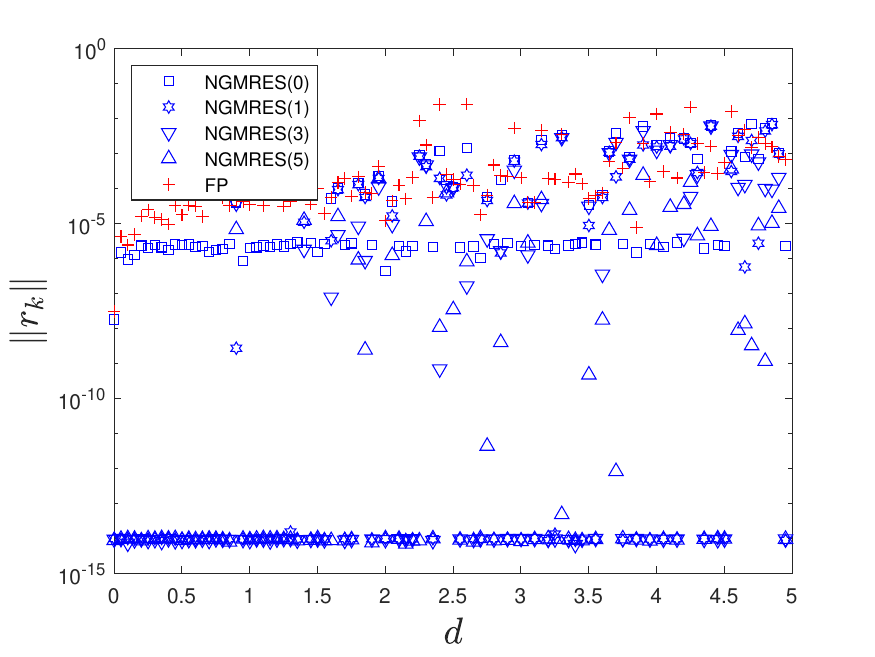}
	\caption{Final residual norm versus initial distance $d=\|x_0-x^*\|$ for FP and NGMRES($m$). Left panel is for Example \ref{ex:2nonlinear-contr} with $c_1=c_2=1$ and $x^*=[0, 0]^T$.  Right panel is for Example \ref{ex:trig}.}\label{fig:rkVsIn}
\end{figure}

In the following, we study Theorem \ref{thm:m0q-convergence} and condition \eqref{eq:NGMRE0-rho-requirement}.
\begin{example}  \label{ex:tworepeatroot}
	We consider solving $g(x)=0$ taken from \cite{chen2023short} with some modifications, where
	\begin{equation*}
		g(x)=  \begin{bmatrix}
			c (z_1-1+(z_2-3)^2)\\
			2/3*\left(3/2(z_1-1)(z_2-3)+(z_2-3)+(z_2-3)^3\right)
		\end{bmatrix}, \quad  \text{where} \,\,\, x=\begin{bmatrix} z_1\\z_2 \end{bmatrix},
	\end{equation*}
	and the fixed-point function  
	\begin{equation*}
		q(x)=x-g(x).
	\end{equation*}
	Note that one solution of $g(x)=0$ is $x^*=[1, 3]^T$, which is also a fixed point of  $x=q(x)$. 
\end{example}
We consider two choices for $c$: $c=c_1=3/4$ and $c=c_2=1/4$. It can be shown that $\|q'(x^*)\|=1/3$ for $c_1$ and $\|q'(x^*)\|=3/4$ for $c_2$. We select the initial guess as $x_0=[1.1, 3.1]^T$, which is sufficiently close to the exact solution. Numerically, we find that $\|q'(x_0)\|\approx 0.3593$ for $c_1$ and $\|q'(x_0)\|\approx 0.7069$ for $c_2$. For the choice of $c_1$, \eqref{eq:NGMRE0-rho-requirement} holds. In Figure \ref{fig:rkm}, we analyze the influence of the window size $m$ on convergence behavior. Surprisingly, NGMRES(1) outperforms  NGMRES(0), NGMRES(3) and NGMRES(5). NGMRES(0) converges q-linearly, whereas this is not the case for NGMRES($m$) when $m>1$.  On the left panel of Figure \ref{fig:rkm}, we observe that the q-factor $\eta$ is significantly smaller than the one predicted in Theorem \ref{thm:m0q-convergence}.  Although \eqref{eq:NGMRE0-rho-requirement} does not hold for the choice of $c_2$,  NGMRES(0) still exhibits q-linear convergence. In the future, we aim to explore more examples and refine the convergence analysis presented in Theorem \ref{thm:m0q-convergence}. 

\begin{figure}[h!]
	\centering
	\includegraphics[width=6cm]{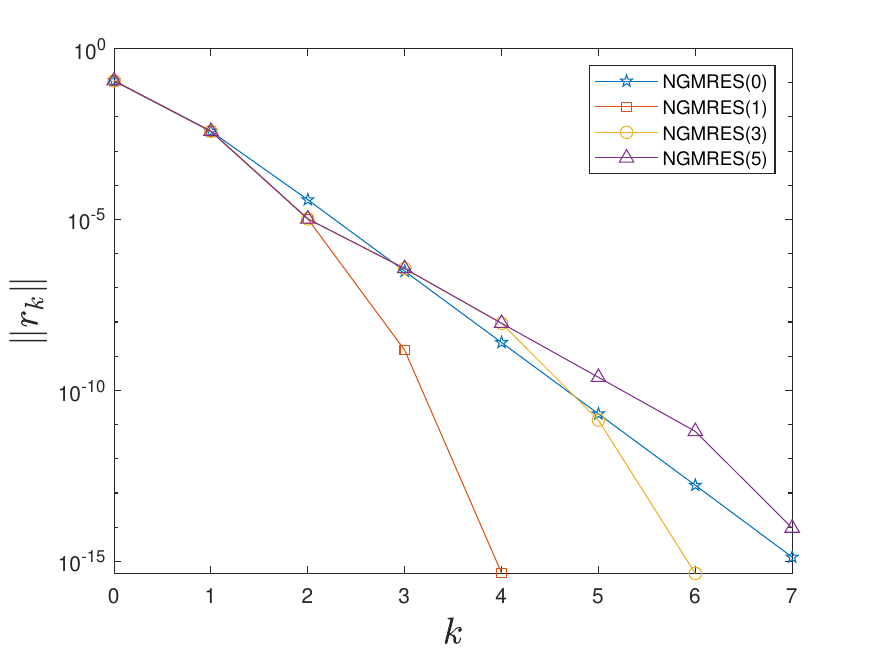}
	\includegraphics[width=6cm]{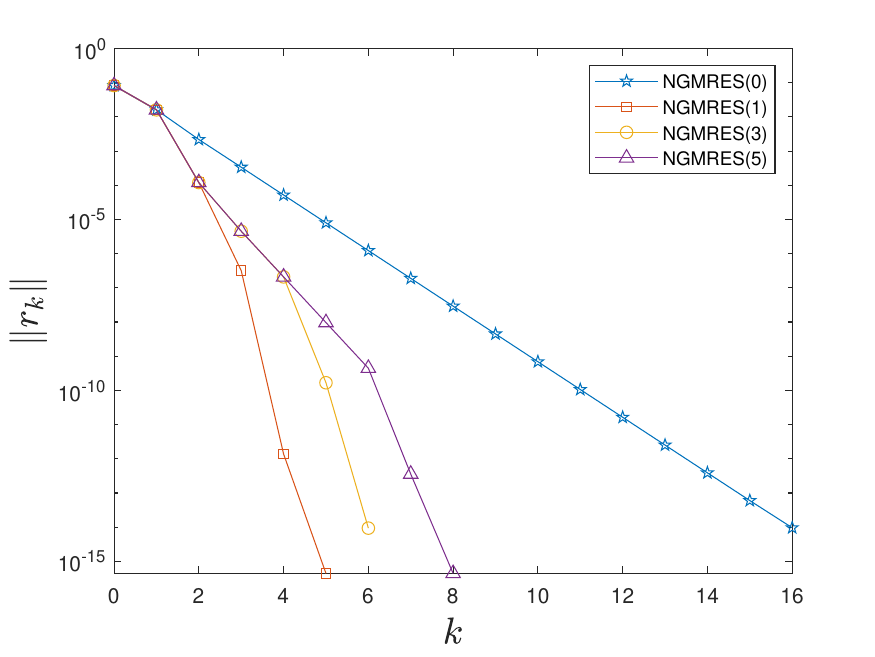}
	\caption{Example \ref{ex:tworepeatroot}: Convergence history.  Left panel is for $c_1=3/4$.  Right panel is for $c_2=1/4$.}\label{fig:rkm}
\end{figure}

\section{Conclusion}\label{sec:con}

In this work, we consider the convergence analysis for NGMRES applied to accelerate contractive fixed-point iterations.  We proved that the residuals of NGMRES($m$) converge r-linearly, provided that the solutions of the least-squares problems are bounded. For NGMRES(0), we proved that the residuals of NGMRES($m$) converge q-linearly. Numerical experiments are presented to validate these ideas. Our numerical results suggest that NGMRES converges much faster than the underlying fixed-point iteration. The convergence behavior of NGMRES is quite complex. There are many interesting open questions arising from the numerical experiments. For example, by how much can NGMRES improve the underlying fixed-point iteration?  We observed that even though the fixed-point iteration is divergent, NGMRES($m$) can converge. In the future, we will study convergence analysis for NGMRES, where the underlying fixed-point iterations are noncontractive,  provide improved NGMRES convergence analysis, and apply NGMRES to solve more challenging nonlinear problems.
  
\bibliographystyle{plain}
\bibliography{NGMRESbib}
\end{document}